\newcommand{\Z}{{\mathbb Z}} 
\newcommand{\N}{{\mathbb N}} 
\newcommand{\Q}{{\mathbb Q}}
\newcommand{\cH}{{\mathcal H}}
\newcommand{\cA}{{\mathcal A}}
\DeclareMathOperator{\sign}{sign}
\newtheorem{theorem}{Theorem}
\newtheorem{prop}{Proposition}
\newtheorem*{rem*}{Remark}
\begin{document}  
\author{K.L Wong}
\address{ 
Department of Mathematics,
University of Hawaii, 
2565 McCarthy Mall, 
Honolulu, HI,  96822-2273 
}
\email{wongkl@math.hawaii.edu}

\title[]{Sums of Quadratic Functions with two Discriminants}
\begin{abstract} 
Zagier in \cite{Zagier} discusses a construction of a function $F_{k,D}(x)$  defined for an even integer $k \geq 2$, and a positive discriminant $D$. This construction is intimately related to 
half-integral weight modular forms. In particular, the average value of this function is a constant multiple of the $D$-th Fourier coefficient of weight $k+1/2$ Eisenstein series constructed by H. Cohen in \cite{Cohen}. 

In this note we consider a construction which works both for even and odd positive integers $k$. Our function $F_{k,D,d}(x)$
depends on two discriminants $d$ and $D$ with signs $\sign(d)=\sign(D)=(-1)^k$, degenerates to Zagier's function when $d=1$, namely,
\[
F_{k,D,1}(x)=F_{k,D}(x),
\]
and has very similar properties. In particular, we prove that the average value of $F_{k,D,d}(x)$ is again  a Fourier coefficient 
of H. Cohen's Eisenstein series of weight $k+1/2$, while now the integer $k \geq 2$ is allowed to be both even and odd.

\end{abstract}
\maketitle

\section{Introduction} \label{intro}

Let $\mathfrak{Q}_D$ be the set of all quadratic functions $Q=ax^2+bx+c=[a,b,c]$ with integer coefficients and of discriminant $D=b^2-4ac>0$. For an even positive integer $k \geq 2$, Zagier \cite{Zagier} defines the function $F_{k,D}: \mathbb{R} \to \mathbb{R}$ by
\begin{equation*}
\displaystyle F_{k,D}(x):=\sum_{\substack{Q \in \mathfrak{Q}_D \\a<0<Q(x)}}Q(x)^{k-1}
\end{equation*}
and investigates its striking properties. The construction raises an obvious question, what happens if $k$ is odd:
the function $ F_{k,D}(x)$ fails to have all these properties then.
In \cite[Section 9]{Zagier}, Zagier explains how one can gain the extra freedom and to allow $k$ to be odd:
he suggests to consider a symmetrization 
\[
 F_{k,\mathcal A}(x):=\sum_{\substack{Q \in \mathcal{A}\\a<0<Q(x)}}Q(x)^{k-1} + (-1)^k\sum_{\substack{Q \in \mathcal{-A}\\a<0<Q(x)}}Q(x)^{k-1}
\]
where the summation is restricted to quadratic forms in one equivalence class $\mathcal {A} \subset \mathfrak{Q}_D$ which is an
orbit in $\mathfrak{Q}_D$ under the action of $PSL_2(\Z)$, and
\[
-\mathcal{A} = \left\{-Q \ \vert \ Q \in \mathcal{A}\right\}.
\]
However, restricting to one class $\mathcal A$ does not allow for a generalization to odd $k$ of one of important properties of 
$F_{k,D}(x)$ which is discussed in \cite[Section 14]{Zagier}. Namely, one can define a constant 
$F_{k,0}$ such  that 
for every $x$, for even $k\geq 2$, the generating function $F_{k,0}+ \sum_D F_{k,D}(x) q^D$, where the sum is taken over all discriminants $D>0$, is the $q$-expansion of 
a modular form of weight $k+1/2$ in Kohnen's $+$-space. The functions $F_{k,D}(x)$ are $1$-periodic, and their average values are calculated by Zagier in  \cite[Section 8]{Zagier}. These are, up to a common multiple, $q$-expansion coefficients of H. Cohen's Eisenstein series. In order to state the result of this calculation, we 
denote by $\cH_k(\tau)$ the weight $k+1/2$ Eisenstein series on $\Gamma_0(4)$ introduced by H. Cohen in \cite{Cohen}:
\[
\cH_k(\tau)=\zeta(1-2k)+\sum_{(-1)^kD > 0} H(k,|D|) q^{|D|} \hspace{3mm} \textup{with $q=\exp(2 \pi i \tau)$ and $\Im(\tau)>0$ throughout.}
\]
The summation runs over discriminants $D$ such that $(-1)^kD > 0$, and $H(k,|D|)$ denote Cohen's numbers.
These are essentially the values at negative integers of Dirichlet $L$-function of the 
quadratic character associated with the field extension $\Q(\sqrt{D})/\Q$.
We refer to \cite{Cohen} for the definition of $H(k,D)$ and do not duplicate Cohen's definition in this paper.

The result of Zagier's calculation in \cite[Section 8]{Zagier} can now be stated as the identity
\begin{equation}  \label{zc}
\frac{\zeta(1-2k)}{\zeta(1-k) }\left(\frac{1}{2}\zeta(1-k)+\sum_{D>0} \int_0^1 F_{k,D}(x)   dx \ q^D\right)= \frac{1}{2} \cH_k(\tau)
\end{equation}
which holds for even $k \geq 2$.

In this paper, we present a generalization of $ F_{k,D}(x)$ which allows us to produce an exact analog of (\ref{zc}) for odd $k$.

Let $D$ be any discriminant, $d$ be a fundamental discriminant such that $\Delta := Dd>0$. For a quadratic form  $Q=ax^2+bx+c=[a,b,c]$ with integer coefficients and of discriminant
\[
b^2-4ac = \Delta,
\]
the value of genus character $\chi_d(Q)$ is defined (cf. \cite{Gross}) by
\[
\chi_d(Q) = 
\begin{cases} 
0 & \textup{if $(a,b,c,d)>1$ }\\
\left(\frac{d}{r}\right) & \textup{if $(a,b,c,d)=1,$ where $Q$ represents $r,\ \ $ $(r,d)=1$.}
\end{cases}
\]

We now assume that $k>1$ is an integer, and 
\[
\sign d = \sign D = (-1)^k.
\]
We define
\[
\displaystyle F_{k,D,d}(x):=\sum_{\substack{Q \in \mathfrak{Q}_{Dd} \\a<0<Q(x)}} \chi_d (Q) Q(x)^{k-1}.
\]

Note that our $F_{k,D,d}(x)$ generalizes Zagier's  $F_{k,D}(x)$ directly. Namely, for even $k>1$, we have
\[
F_{k,D,1}(x) = F_{k,D}(x).
\]

By the same argument as in \cite{Zagier}, our functions $F_{k,D,d}(x)$ are $1$-periodic and continuous for $k>1$, thus 
their average values make sense. The main result of this paper is the following generalization of (\ref{zc}).

\begin{theorem}
For  an integer $k>1$, and  a fundamental discriminant $d$ such that $\sign d = (-1)^k$,
\[
\frac{\zeta(1-2k)}{H(k,|d|)}\left(\frac{1}{2}H(k,|d|)+\sum_{(-1)^kD>0} \int_0^1 F_{k,D,d}(x)  dx \ q^{|D|}\right)= 
\frac{1}{2} \cH_k(\tau).
\]
\end{theorem}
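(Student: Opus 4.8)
The plan is to follow Zagier's Section~8 strategy: compute the average $\int_0^1 F_{k,D,d}(x)\,dx$ in closed form and recognize it as a product of two of Cohen's numbers. The first step is to \emph{unfold}. Writing $T=\left(\begin{smallmatrix}1&1\\0&1\end{smallmatrix}\right)$, the form $[a,b,c]$ is sent by $T$ to $[a,2a+b,a+b+c]$, which has the same leading coefficient $a$ and satisfies $(Q|_T)(x)=Q(x+1)$; this is exactly the $1$-periodicity we are given. Since $\chi_d$ is a class function on $SL_2(\Z)$-orbits it is constant on each $\langle T\rangle$-orbit, so partitioning the forms of discriminant $\Delta:=Dd$ with $a<0$ into $\langle T\rangle$-orbits and summing $\int_0^1$ term by term collapses the periodic sum to one integral over $\R$ per orbit:
\[
\int_0^1 F_{k,D,d}(x)\,dx=\sum_{[Q]\in\langle T\rangle\backslash\{Q\in\mathfrak{Q}_{Dd}\,:\,a<0\}}\chi_d(Q)\int_{\{Q(x)>0\}}Q(x)^{k-1}\,dx .
\]

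For a single form with $a<0$ the downward parabola $Q$ is positive between its two real roots, whose difference is $\sqrt{\Delta}/|a|$. Writing $Q(x)=|a|\,(x-r_1)(r_2-x)$ and substituting $x=r_1+u(r_2-r_1)$ reduces the inner integral to a Beta integral, giving $\int_{\{Q(x)>0\}}Q(x)^{k-1}\,dx=\frac{((k-1)!)^2}{(2k-1)!}\,\frac{\Delta^{\,k-1/2}}{|a|^{k}}$ because $\int_0^1 u^{k-1}(1-u)^{k-1}\,du=\Gamma(k)^2/\Gamma(2k)$. Collecting the leading coefficients $a=-m$ yields
\[
\int_0^1 F_{k,D,d}(x)\,dx=\frac{((k-1)!)^2}{(2k-1)!}\,\Delta^{\,k-1/2}\,S,\qquad
S:=\sum_{m=1}^{\infty}\frac{1}{m^{k}}\sum_{\substack{b \bmod 2m\\ b^2\equiv\Delta\ (4m)}}\chi_d\!\big([-m,b,\tfrac{\Delta-b^2}{4m}]\big),
\]
the series converging since $k>1$.

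It therefore suffices to prove the clean symmetric identity $\int_0^1 F_{k,D,d}(x)\,dx=\frac{H(k,|d|)\,H(k,|D|)}{2\,\zeta(1-2k)}$. Granting this, comparison of $q^{|D|}$-coefficients with $\tfrac12\cH_k(\tau)=\tfrac12\zeta(1-2k)+\tfrac12\sum_{(-1)^kD>0}H(k,|D|)q^{|D|}$ gives the theorem, while the constant terms agree because $\frac{\zeta(1-2k)}{H(k,|d|)}\cdot\tfrac12 H(k,|d|)=\tfrac12\zeta(1-2k)$. This also recovers Zagier's normalization: for $d=1$ one has $H(k,1)=\zeta(1-k)$, and the identity reduces to $\int_0^1 F_{k,D}(x)\,dx=\zeta(1-k)H(k,D)/(2\zeta(1-2k))$, i.e.\ to \eqref{zc}.

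The remaining and main task is the arithmetic evaluation of $S$. Here I would use genus theory: by the defining property of $\chi_d$ the weighted count $\sum_{b}\chi_d([-m,b,\cdot])$ is multiplicative in $m$, and by the Gross--Kohnen genus-character identity (cf.\ \cite{Gross}) it splits, along each factorization of the represented integer, into a convolution of the Kronecker symbols $\big(\tfrac{d}{\cdot}\big)$ and $\big(\tfrac{D_0}{\cdot}\big)$, where $D=D_0f^2$ with $D_0$ fundamental. Consequently the Dirichlet series $\sum_m(\cdots)m^{-s}$ factors, prime by prime via the Chinese Remainder Theorem, into $\zeta(2s)^{-1}L(s,\chi_d)\,L(s,\chi_{D_0})$ times a finite Euler correction reproducing Cohen's divisor sum $\sum_{t\mid f}\mu(t)\chi_{D_0}(t)t^{k-1}\sigma_{2k-1}(f/t)$. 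Evaluating at $s=k$ and applying the functional equations of $\zeta$, $L(\,\cdot\,,\chi_d)$ and $L(\,\cdot\,,\chi_{D_0})$ turns $\zeta(2k)$ into $\zeta(1-2k)$ and the $L$-values into $L(1-k,\chi_d)$ and $L(1-k,\chi_{D_0})$, which build $H(k,|d|)$ and $H(k,|D|)$; the conductor factors produced by the functional equations cancel the geometric factor $\Delta^{\,k-1/2}$, while the archimedean Gamma factors combine with $\Gamma(k)^2/\Gamma(2k)$ through the duplication formula so that all powers of $\pi$ cancel and only the rational constant $1/(2\zeta(1-2k))$ survives. The principal obstacles are (i) making the genus-character splitting explicit and uniform when $\gcd(d,D)>1$ and when $D$ is non-fundamental, so that Cohen's divisor correction emerges exactly, and (ii) the careful Gamma- and $\pi$-bookkeeping across the three functional equations that must collapse to the stated rational constant.
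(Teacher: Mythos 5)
Your overall route is the same as the paper's: unfold the integral over $\Gamma_\infty$-orbits of forms with $a<0$, evaluate $\int_{\{Q>0\}}Q(x)^{k-1}\,dx$ as a Beta integral (your constant $\Gamma(k)^2/\Gamma(2k)$ equals the paper's $c_k$ by the duplication formula), reduce the theorem to the single identity $\int_0^1F_{k,D,d}(x)\,dx=H(k,|D|)H(k,|d|)/(2\zeta(1-2k))$, i.e.\ to (\ref{id}), and then factor the Dirichlet series $\sum_m N_{D,d}(m)m^{-k}$ into an Euler product whose value is $L(k,\chi_{D_0})L(k,\chi_d)\zeta(2k)^{-1}$ times Cohen's divisor sum over $t\mid f$, finishing with the functional equations. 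Two things are missing. First, a sign: what is multiplicative is $(-1)^kN_{D,d}$, not $N_{D,d}$ itself. Indeed $N_{D,d}(1)=\chi_d([-1,b,c])=\left(\frac{d}{-1}\right)=(-1)^k$, and the paper's Proposition \ref{mult} proves $N_{D,d}(n_1n_2)=(-1)^kN_{D,d}(n_1)N_{D,d}(n_2)$. Dropping this factor makes the final answer off by $(-1)^k$, which matters precisely in the new case of odd $k$ that the theorem is designed to cover.

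Second, and more seriously, the step you list as ``principal obstacles'' --- making the genus-character splitting explicit at primes dividing $f$, $d$, or $D_0$, and at $p=2$ --- is not a loose end but the entire technical content of the proof. The paper's Proposition \ref{fact} computes each local factor $\sum_{n\ge0}(-1)^kN_{D,d}(p^n)p^{-nk}$ in closed form by explicitly determining the numbers $N_{D,d}(p^n)$ through a case analysis on whether $p$ divides $f$, $d$, $D_0$ and whether $p=2$ (for instance, when $p\mid f$, $p\mid d$, $p\nmid D_0$ with $p$ odd, one finds $N_{D,d}(p^{2s-1})=0$ and $N_{D,d}(p^{2s})=(-1)^k(p^s-p^{s-1})$ for $1\le s\le e$, plus boundary terms), and Section 3 of the paper is devoted to exactly this. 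Asserting that the Gross--Kohnen genus-character identity makes the series split is not a proof: the splitting is precisely what has to be verified prime by prime, and it is not formal at primes dividing $\gcd(d,D)$ or at primes where $D$ fails to be fundamental --- these are the cases producing the correction factor $\sigma_{2k-1}(p^e)-\left(\frac{D_0}{p}\right)p^{k-1}\sigma_{2k-1}(p^{e-1})$. So the proposal is a correct plan that matches the paper's, but it stops at the point where the paper's actual work begins.
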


It is quite natural to ask about the boundary case $k=1$. 
It follows from \cite{Jameson} that  $F_{1,D,d}(x)$ is defined if and only if $x$ is rational, so no averaging is possible.
At the same time, the series $\cH_1$ is not modular (see \cite{Cohen,Zn}). The following result checks with these observations.

\begin{theorem}
For a fundamental discriminant $d<0$ and a discriminant $D<0$ with $Dd$ being non-square, and $x \in \Q$, we have that
\[
F_{1,D,d}(x)=0.
\]
\end{theorem}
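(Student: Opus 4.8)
The plan is to realize $F_{1,D,d}(x)$ as the value at the pair of cusps $(\infty,x)$ of an $SL_2(\Z)$-invariant, additive, antisymmetric ``crossing pairing'' on $\mathbb{P}^1(\Q)=\Q\cup\{\infty\}$, and then to force that pairing to vanish using that the abelianization of $SL_2(\Z)$ is finite.

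First I would note that for $k=1$ the weight $Q(x)^{k-1}$ equals $1$, so
\[
F_{1,D,d}(x)=\sum_{\substack{Q=[a,b,c]\in\mathfrak{Q}_{Dd}\\ a<0<Q(x)}}\chi_d(Q),
\]
and that this is a finite sum for $x=p/q\in\Q$ (in lowest terms): from $4a\,Q(x)=(2ax+b)^2-Dd$ and $Q(x)=Q(p,q)/q^2\ge 1/q^2$ one gets $0\le(2ap+bq)^2=Dd\,q^2+4a\,Q(p,q)<Dd\,q^2$, whence $|a|<Dd\,q^2/4$, after which $b$ and $c$ are bounded. Because $Dd$ is not a square, no $Q\in\mathfrak{Q}_{Dd}$ has a rational root, so $\sign Q$ is a well-defined, nowhere-zero function on the cusps (in homogeneous coordinates $\sign Q([u_1:u_2]):=\sign Q(u_1,u_2)$); in particular $\sign Q(\infty)=\sign a$. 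Writing $\Phi(u,v):=\sum_{Q\in\mathfrak{Q}_{Dd},\,Q(u)<0<Q(v)}\chi_d(Q)$, the condition $a<0<Q(x)$ becomes $Q(\infty)<0<Q(x)$, so that $F_{1,D,d}(x)=\Phi(\infty,x)$, a finite sum for every pair of cusps.

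Next I would establish three structural properties of $\Phi$. The hypothesis $d<0$ enters here and essentially only here: it gives $\chi_d(-Q)=\left(\tfrac{d}{-1}\right)\chi_d(Q)=-\chi_d(Q)$, and since $Q\mapsto -Q$ is a bijection from $\{Q(u)<0<Q(v)\}$ onto $\{Q(v)<0<Q(u)\}$, we obtain antisymmetry $\Phi(v,u)=-\Phi(u,v)$. Using this, $\Phi(u,v)=\tfrac12\sum_Q\chi_d(Q)\bigl(\sign Q(v)-\sign Q(u)\bigr)$, and the right-hand side telescopes term by term (each sum being finite), which yields additivity $\Phi(u,v)+\Phi(v,w)=\Phi(u,w)$. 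Finally, since $\chi_d$ is invariant under the $SL_2(\Z)$-action $Q\mapsto Q\circ\gamma$ and $\sign\bigl((Q\circ\gamma)(w)\bigr)=\sign Q(\gamma w)$ on $\mathbb{P}^1(\Q)$, the pairing is $SL_2(\Z)$-invariant: $\Phi(\gamma u,\gamma v)=\Phi(u,v)$.

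It then remains to observe that these three properties force $\Phi\equiv 0$. Additivity and antisymmetry give $\Phi(u,v)=h(v)-h(u)$ with $h(u):=\Phi(\infty,u)\in\Z$, and invariance yields $h(\gamma u)-h(u)=h(\gamma v)-h(v)$ for all cusps, so $\gamma\mapsto \phi(\gamma):=h(\gamma u)-h(u)$ is a well-defined homomorphism $SL_2(\Z)\to\Z$. As $SL_2(\Z)^{\mathrm{ab}}\cong\Z/12$ is finite and $\Z$ is torsion-free, $\phi\equiv 0$; hence $h$ is constant on the single $SL_2(\Z)$-orbit $\mathbb{P}^1(\Q)$, and $\Phi(u,v)=h(v)-h(u)=0$, so in particular $F_{1,D,d}(x)=\Phi(\infty,x)=0$. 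The conceptual crux is the passage through the crossing pairing $\Phi$; once it is in place the vanishing is purely formal. The technical care goes into the three identities above — verifying finiteness so that every sum makes sense, checking additivity as an honest identity of finite sums, and tracking the signs at the cusp $\infty$ (equivalently, the leading coefficient $a$) in the $SL_2(\Z)$-invariance — and I expect this sign bookkeeping, together with confirming that $d<0$ is exactly what powers the antisymmetry, to be the only genuinely delicate points.
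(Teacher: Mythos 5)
Your proof is correct, but it takes a genuinely different route from the paper's. The paper establishes three explicit identities --- $F_{1,D,d}(x+1)=F_{1,D,d}(x)$, $F_{1,D,d}(1/x)=F_{1,D,d}(x)$, and $F_{1,D,d}(0)=0$ --- and concludes via the continued-fraction (Euclidean) algorithm, which carries any rational to $0$ by the moves $x\mapsto x-1$ and $x\mapsto 1/x$. You instead package everything into the crossing pairing $\Phi$ on cusps, verify antisymmetry, additivity and $SL_2(\Z)$-invariance, and kill the resulting cocycle using $\mathrm{Hom}(SL_2(\Z),\Z)=0$. The two arguments share their essential inputs: the sign flip $\chi_d(-Q)=-\chi_d(Q)$ coming from $d<0$ (this is precisely what the paper uses to prove its identities (2) and (3)), and the non-squareness of $Dd$ to rule out boundary terms with $Q(u)=0$. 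What the paper's route buys is elementarity and brevity: each identity is a short explicit manipulation of a finite sum. What your route buys is conceptual clarity: the vanishing is explained by the finiteness of $SL_2(\Z)^{\mathrm{ab}}$, you never compute with explicit generators or with the determinant-$(-1)$ substitution $[a,b,c]\mapsto[c,b,a]$ underlying the paper's inversion identity, and the anchor value $F_{1,D,d}(0)=0$ comes for free from $h(\infty)=\Phi(\infty,\infty)=0$ rather than needing a separate verification. One small slip: since both $Q$ and $-Q$ contribute a nonzero term whenever $Q$ separates $u$ from $v$, the correct normalization is $\Phi(u,v)=\tfrac14\sum_Q\chi_d(Q)\left(\sign Q(v)-\sign Q(u)\right)$ rather than $\tfrac12$; this is harmless, since additivity of $\Phi$ only requires it to be a fixed constant multiple of the telescoping sum.
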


The proof of Theorem 1 is presented in Section 2.  
Equality of constant terms of $q$-series in Theorem 1 follows directly from the definition of Cohen's numbers $H(k,N)$
in \cite{Cohen}.
Thus Theorem 1 
is equivalent to the term-by-term identity
\begin{equation} \label{id}
\int_0^1 F_{k,D,d}(x)  dx  =  \frac{H(k,|D|)H(k,|d|)}{2\zeta(1-2k)},
\end{equation}
and that is what we prove in Section 2. This proof depends on two technical propositions
(Proposition \ref{mult} and  \ref{fact} in Section 2) which claim a decomposition of a certain
Dirichlet series into an Euler product, and calculate its Euler factors. The proofs of these propositions 
are presented in Section 3 of the paper. 


The value of genus character $\chi_d(Q)=\chi_d(\cA)$ depends only on the class $\cA \in \mathfrak{Q}_{Dd}$ such that $Q\in \cA$, not 
on the individual form $Q$ (see \cite{Gross} for details). It follows that 
\begin{equation} \label{sumid}
F_{k,D,d}(x) = \sum_{\cA} \chi_d(\cA) F^*_{k,\cA}(x),
\end{equation}
where the sum is taken over all classes $\cA$ of quadratic forms of discriminant $Dd$, and
\[
F^*_{k,\cA}(x) = \sum_{\substack{Q \in \cA \\ a<0<Q(x)}} Q(x)^{k-1}
\]
are introduced and briefly discussed in \cite[Section 9]{Zagier}. In particular, since $F^*_{k,\cA}(x)$ are periodic functions with period $1$, so are our $F_{k,D,d}(x)$, and the integrals in the left of (\ref{id}) may be interpreted as  average values of these functions.

In Section 4, we address the case when $k=1$. We show cancellations in (\ref{sumid}) which prove Theorem 2.
\vskip 10pt
{\sc Acknowledgement} 

The author is grateful to Prof. Pavel Guerzhoy for his advice and great support. His comments were very valuable to the writing of this paper.

\section{Proof of Theorem 1} \label{pt1}

In this section, we prove Theorem 1. 
\begin{proof}
All we need is to prove (\ref{id}). As in \cite[Section 8]{Zagier}, we have
\begin{align*}
\int_0^1 F_{k,D,d}(x) dx = \sum_{\substack{Q=[a,b,c]\in \mathfrak{Q}_{Dd}/\Gamma_{\infty} \\ a<0}} \chi_d (Q) \beta_k(Q), 
\end{align*}
where $\beta_k(Q):=\int_{-\infty}^{\infty} [\text{max}(0,Q(x))]^{k-1} dx$. 
We evaluate this integral using the substitution $x=\frac{-b+t\sqrt{Dd}}{2a}$:
\begin{align*}
\beta_k(Q)&= c_k(Dd)^{k-\frac{1}{2}}|a|^{-k}  \hspace*{5mm} \text{with} \hspace*{5mm} c_k:=\frac{1}{2^{2k-1}}\int_{-1}^1(1-t^2)^{k-1} dt = \frac{1}{2^{2k-1}}\frac{\Gamma(k) \Gamma (\frac{1}{2})}{\Gamma (k+\frac{1}{2})}.
\end{align*}

It follows that
\begin{align*}
\int_0^1 F_{k,D,d}(x) dx &= c_k |Dd|^{k-1/2}\sum_{\substack{Q=[a,b,c]\in \mathfrak{Q}_{Dd}/\Gamma_{\infty} \\ a<0}} \frac{\chi_d (Q)}{|a|^k}\\
&= c_k |Dd|^{k-1/2}\sum_{n=1}^{\infty} \left( \sum_{\substack{0 \leq b \leq 2n-1 \\ b^2 \equiv Dd \text{ mod } 4n }} \chi_d \left(\left[-n,b,\frac{Dd-b^2}{4n}\right]\right) \right) \frac{1}{n^k}.\\
\end{align*}


\begin{prop} \label{mult}
For a positive integer $n$, let
\[
 N_{D,d}(n):=\sum_{\substack{0 \leq b \leq 2n-1 \\ b^2 \equiv Dd \text{ mod } 4n }} \chi_d \left(\left[-n,b,\frac{Dd-b^2}{4n}\right]\right).
\]
The function $ (-1)^k N_{D,d} \colon \N \rightarrow \Z$ 
is multiplicative.
\end{prop}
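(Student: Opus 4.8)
The plan is to establish multiplicativity by producing, for coprime $m$ and $n$, a weight-preserving bijection between the index set of $N_{D,d}(mn)$ and the product of the index sets of $N_{D,d}(m)$ and $N_{D,d}(n)$. First I would record the normalization: for $n=1$ the unique admissible $b\in\{0,1\}$ gives a form $[-1,b_0,\ast]$ which represents $-1$, so $\chi_d([-1,b_0,\ast])=\left(\frac{d}{-1}\right)=\sign d=(-1)^k$ and hence $(-1)^kN_{D,d}(1)=1$. It then suffices to prove the term-by-term identity $N_{D,d}(mn)=(-1)^kN_{D,d}(m)N_{D,d}(n)$ whenever $\gcd(m,n)=1$, since this is exactly what multiplicativity of $(-1)^kN_{D,d}$ unwinds to.

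Writing $\Delta=Dd$ and $S(N)=\{b\bmod 2N:\ b^{2}\equiv\Delta\bmod 4N\}$ for the set indexing $N_{D,d}(N)$ (equivalently, the forms $[-N,b,\ast]\in\mathfrak{Q}_{Dd}/\Gamma_{\infty}$ with leading coefficient $-N$), I would show the reduction map $b\mapsto(b\bmod 2m,\ b\bmod 2n)$ is a bijection $S(mn)\to S(m)\times S(n)$. Injectivity is immediate from $\mathrm{lcm}(2m,2n)=2mn$. For the count, note that because $\Delta\equiv 0,1\pmod 4$ the number $\rho(M)$ of square roots of $\Delta$ modulo $M$ is multiplicative with $\rho(4)=2$, and the pairing $b\leftrightarrow b+2N$ gives $|S(N)|=\tfrac12\rho(4N)$; labelling so that $m$ is odd and using $\gcd(m,4n)=1$ one gets $\rho(4mn)=\tfrac12\rho(4m)\rho(4n)$, hence $|S(mn)|=|S(m)|\,|S(n)|$ and the injection is onto.

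It remains to show the weights factor: if $b\in S(mn)$ corresponds to $(b_1,b_2)$ and $Q=[-mn,b,c]$, $Q_1=[-m,b_1,c_1]$, $Q_2=[-n,b_2,c_2]$ are the associated forms, then $\chi_d(Q)=(-1)^k\chi_d(Q_1)\chi_d(Q_2)$. The conceptual reason is that $Q_1$ and $Q_2$ are concordant forms with coprime leading coefficients, so Dirichlet composition gives $[Q_1][Q_2]=[mn,b,\ast]$; the discrepancy between $mn$ and the required leading coefficient $-mn$ is precisely the class of $[-1,b_0,\ast]$, whose genus character is $(-1)^k$. Since $\chi_d$ is a genus character, i.e. a homomorphism from the narrow class group to $\{\pm1\}$ trivial on squares, this yields the factorization. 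The case $\gcd(mn,d)=1$ can instead be checked at once from $\chi_d([-N,\ast])=\left(\frac{d}{-N}\right)$ and multiplicativity of the Kronecker symbol, where the three factors of $\left(\frac{d}{-1}\right)=(-1)^k$ combine to leave a single $(-1)^k$.

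The main obstacle is this last step at the \emph{bad} primes, namely the primes dividing $d$ and the prime $2$, where $\chi_d$ is not simply a Kronecker symbol of the leading coefficient and may vanish. Here the clean class-group argument is delicate because the forms need not be primitive (as $D$, and hence $\Delta$, may be non-fundamental), so I expect to argue locally: using the factorization $d=\prod d_i$ into prime discriminants and the description of each local factor of $\chi_d$ via a value represented by $Q$ coprime to $d_i$, one checks prime-by-prime that the factor attached to a prime $p\mid d$ is carried by whichever of $m,n$ is divisible by $p$, and that the content of $Q$ is coprime to $d$ exactly when those of $Q_1$ and $Q_2$ are. Verifying this compatibility of contents and local symbols, together with the behaviour at $2$, is the technical heart of the proof.
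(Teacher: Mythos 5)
Your proposal is correct and follows essentially the same route as the paper: a Chinese-Remainder-Theorem bijection between the index sets (with the same count $\tfrac12 v(4mn)=\tfrac12 v(4m)v(n)$ after singling out the odd factor), combined with multiplicativity of the genus character over composition of the concordant forms $[-m,b,\ast]$ and $[\pm n,b,\ast]$, the single surviving factor $(-1)^k=\left(\frac{d}{-1}\right)$ coming from the sign of a leading coefficient. The bad-prime verification you flag as the ``technical heart'' is not actually expanded in the paper either; it is absorbed into the asserted concordant-form identity $\chi_d([-n_1n_2,b,c])=\chi_d([-n_1,b,cn_2])\,\chi_d([n_2,b,-cn_1])$, taken as a known property of the genus character from the Gross--Kohnen--Zagier reference.
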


We postpone a proof of Proposition \ref{mult} till Section 3, and continue with our proof of Theorem 1.


Proposition \ref{mult} allows us to write an Euler product expansion for the series
$\sum_{n=1}^{\infty} (-1)^kN_{D,d}(n)n^{-k}$, and we have that
\begin{align*}
\int_0^1 F_{k,D,d}(x) dx &= c_k |Dd|^{k-1/2}\sum_{n=1}^{\infty} \frac{N_{D,d}(n)}{n^k}\\
&= (-1)^kc_k |Dd|^{k-1/2}\sum_{n=1}^{\infty} \frac{(-1)^kN_{D,d}(n)}{n^k}\\
&= (-1)^kc_k |Dd|^{k-1/2} \prod_p \sum_{n=0}^{\infty} \frac{(-1)^kN_{D,d}(p^n)}{p^{nk}}.\\
\end{align*}

Our next proposition calculates the Euler factors in the above product
\begin{prop} \label{fact}
Let $p$ be a prime.
Let $D=D_0f^2$ with a fundamental discriminant $D_0$. Let $e\geq 0$ be the integer defined by $p^e || f$.
Then
\[
 \sum_{n=0}^{\infty} \frac{(-1)^kN_{D,d}(p^n)}{p^{nk}}= \frac{1-p^{-2k}}{\left(1-\left(\frac{D_0}{p}\right)p^{-k}\right)\left(1-\left(\frac{d}{p}\right)p^{-k}\right)}
 \frac{1}{(p^e)^{2k-1}} \left(\sigma_{2k-1}(p^{e})-\left(\frac{D_0}{p}\right)p^{k-1}\sigma_{2k-1}(p^{e-1})\right),
\]
where we adopt the usual convention $\sigma_{2k-1}(1/p)=0$.

\end{prop}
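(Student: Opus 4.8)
The plan is to prove the stated formula by computing each Euler factor
\[
L_p := \sum_{n=0}^{\infty} \frac{(-1)^k N_{D,d}(p^n)}{p^{nk}}
\]
directly and matching it against the right-hand side. Proposition \ref{mult} has already reduced the global integral to the product $\prod_p L_p$, so it suffices to evaluate $L_p$ one prime at a time. A useful orientation is to recognize the target: the factor $\frac{1-p^{-2k}}{(1-(\frac{D_0}{p})p^{-k})(1-(\frac{d}{p})p^{-k})}$ is exactly the product of the Euler factors of $L(k,\chi_{D_0})$, of $L(k,\chi_d)$, and of $\zeta(2k)^{-1}$, while the remaining factor $\frac{1}{p^{e(2k-1)}}\left(\sigma_{2k-1}(p^e)-(\frac{D_0}{p})p^{k-1}\sigma_{2k-1}(p^{e-1})\right)$ is precisely the local contribution of the square factor $f^2$ to Cohen's number $H(k,|D|)$ (its $p$-part being $\sum_{i}\mu(p^i)(\frac{D_0}{p})^i p^{i(k-1)}\sigma_{2k-1}(p^{e-i})$, in which only $i=0,1$ survive). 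The computation should therefore expose exactly these four ingredients, and the proposition will follow by collecting them.

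First I would set $\rho_\nu := \#\{\, b \bmod 2p^\nu : b^2 \equiv \Delta \pmod{4p^\nu}\,\}$ and determine the weight $\chi_d([-p^\nu,b,c])$ attached to each admissible $b$. When $p \nmid d$ this is immediate: the form represents its leading coefficient $a=-p^\nu$, which is coprime to $d$, so $\chi_d = (\frac{d}{-p^\nu}) = (-1)^k (\frac{d}{p})^\nu$ uniformly in $b$, whence $(-1)^k N_{D,d}(p^\nu) = (\frac{d}{p})^\nu \rho_\nu$. To fix the method I would first treat $p \nmid \Delta$, where $\rho_\nu = 2$ for $\nu \ge 1$ exactly when $(\frac{D_0}{p})(\frac{d}{p})=1$ and $\rho_\nu=0$ otherwise; then $L_p$ is an elementary geometric series which collapses to $\frac{1-p^{-2k}}{(1-(\frac{D_0}{p})p^{-k})(1-(\frac{d}{p})p^{-k})}$ in each sign pattern of $\left((\frac{D_0}{p}),(\frac{d}{p})\right)$. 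This confirms the shape of the answer in the unramified case $e=0$.

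The substantive work is the ramified primes $p \mid \Delta$, which I would organize by the values of $v_p(D_0)$, of $v_p(d)$, and of $e=v_p(f)$. Writing $\Delta=p^{w}\Delta'$ with $p\nmid\Delta'$ and $w=v_p(\Delta)$, the count $\rho_\nu$ grows like $p^{\lfloor\nu/2\rfloor}$ for $\nu$ below the ramification threshold and then, once $b$ is forced to a fixed $p$-adic valuation, becomes either a constant multiple of a geometric tail (when $w$ is even, so that $\Delta'$ can be a square) or simply vanishes beyond the threshold (when $w$ is odd). Splitting $L_p$ at this threshold produces a finite sum that assembles into $\sigma_{2k-1}(p^e)$ and $\sigma_{2k-1}(p^{e-1})$, together with the geometric tail that rebuilds the $L$-function factors; summing and simplifying should reproduce the stated product. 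This bookkeeping is routine but lengthy, and I would present it case by case.

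The hard part will be the genuinely ramified interactions. When $p \mid d$ the leading coefficient is no longer coprime to $d$, so $\chi_d([-p^\nu,b,c])$ must be evaluated on another represented value (for instance $c$, or $Q(1)=a+b+c$), and this value, hence the weight, can vary with $b$; controlling the resulting partial cancellation across the solutions $b \bmod 2p^\nu$ is the crux. This is compounded when $p \mid f$ holds simultaneously with $p \mid D_0$ or $p \mid d$, since then $D=D_0 f^2$ is non-fundamental at $p$ and the clean splitting of the genus character into two fundamental discriminants is unavailable. Finally, the prime $p=2$ needs a separate dyadic analysis of $b^2\equiv\Delta\pmod{4\cdot 2^\nu}$ and of the Kronecker symbols, and I expect it to be the single most delicate case. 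The last step is to verify that the $f$-dependent factor matches the local expansion of Cohen's $H(k,|D_0|f^2)$, for which I would appeal to Gauss's genus theory to keep the character evaluations organized.
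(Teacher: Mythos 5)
Your overall strategy is the same as the paper's: compute each Euler factor by explicitly evaluating $N_{D,d}(p^n)$, splitting into cases according to whether $p$ divides $d$, $D_0$, and $f$, treating $p=2$ separately, and summing the resulting (eventually geometric) series. Your treatment of the unramified case is correct and essentially identical to the paper's \textbf{Case 1(i)(a)}: since $p\nmid d$ the form $[-p^\nu,b,c]$ represents $-p^\nu$ coprime to $d$, so $\chi_d=\left(\frac{d}{-1}\right)\left(\frac{d}{p}\right)^{\nu}=(-1)^k\left(\frac{d}{p}\right)^{\nu}$ uniformly in $b$, and the count of admissible $b$ is $0$ or $2$ according to $\left(\frac{Dd}{p}\right)$. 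You also correctly identify where the real difficulty lies: when $p\mid d$ the character must be evaluated on some other represented value and genuinely varies with $b$, producing partial cancellation (indeed, in the paper's \textbf{Case 2(iii)(a)} the raw solution count at level $p^{2s}$ is $p^{s}$ while $|N_{D,d}(p^{2s})|=p^{s}-p^{s-1}$).

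The gap is that this crux is named but never resolved. Everything beyond the unramified case is deferred as ``routine but lengthy bookkeeping'' or flagged as ``the hard part,'' yet the ramified primes are precisely where the factor
\[
\frac{1}{(p^e)^{2k-1}}\left(\sigma_{2k-1}(p^{e})-\left(\frac{D_0}{p}\right)p^{k-1}\sigma_{2k-1}(p^{e-1})\right)
\]
is produced, and the proposition is vacuous without them. What is needed, and what the paper supplies in its worked ramified case, is the explicit table of values such as $N_{D,d}(p^{2s-1})=0$, $N_{D,d}(p^{2s})=(-1)^k(p^{s}-p^{s-1})$ for $1\le s\le e$, $N_{D,d}(p^{2e+1})=(-1)^k\left(\frac{D_0}{p}\right)p^{e}$, and $N_{D,d}(p^{n})=0$ for $n\ge 2e+2$, followed by the finite summation that assembles the two divisor sums. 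Your heuristic that $\rho_\nu$ grows like $p^{\lfloor \nu/2\rfloor}$ and then either stabilizes or vanishes describes only the unweighted count; it does not by itself determine the signed, character-weighted sums $N_{D,d}(p^n)$, and no mechanism is given for computing the partial cancellation you correctly predict. Until those local evaluations (including the dyadic case) are actually carried out, the proof is an outline rather than an argument.
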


We postpone a proof of Proposition \ref{fact} till Section 3, and continue with our proof of Theorem 1.

Assume that $D=D_0f^2$ with a fundamental discriminant $D_0$, and let 
$f=\prod_{i=1}^{m} p_i^{e_i}$. 
An inductive argument  on the number of prime factors of $f$ allows us to conclude that
\[
\frac{1}{f^{2k-1}}\sum_{r|f} \mu (r)\left(\frac{D_0}{r}\right)r^{k-1}\sigma_{2k-1}\left(\frac{f}{r}\right) = \prod_{i=1}^{m} \frac{1}{(p_i^{e_i})^{2k-1}} \left(\sigma_{2k-1}(p_i^{e_i})+\mu(p_i)\left(\frac{D_0}{p_i}\right)p_i^{k-1}\sigma_{2k-1}(p_i^{e_i-1})\right).
\]

We take this equality into the account and use Proposition \ref{fact} to find that
\begin{align*}
&\int_0^1 F_{k,D,d}(x) dx \\
=& (-1)^k c_k |Dd|^{k-1/2}\prod_p \frac{1-p^{-2k}}{\left(1-\left(\frac{D_0}{p}\right)p^{-k}\right)\left(1-\left(\frac{d}{p}\right)p^{-k}\right)}\frac{1}{f^{2k-1}} \sum_{r|f} \mu (r)\left(\frac{D_0}{r}\right)r^{k-1}\sigma_{2k-1}\left(\frac{f}{r}\right)\\
=& (-1)^k c_k |Dd|^{k-1/2} L_{D_0}(k)L_{d}(k)\frac{1}{\zeta(2k)}\frac{1}{f^{2k-1}} \sum_{r|f} \mu (r)\left(\frac{D_0}{r}\right)r^{k-1}\sigma_{2k-1}\left(\frac{f}{r}\right)\\
=& (-1)^k c_k |D_0d|^{k-1/2} L_{D_0}(k)L_{d}(k)\frac{1}{\zeta(2k)}\sum_{r|f} \mu (r)\left(\frac{D_0}{r}\right)r^{k-1}\sigma_{2k-1}\left(\frac{f}{r}\right).
\end{align*}
Now standard functional equation for Dirichlet $L$-functions (and the definition of Cohen's numbers from \cite{Cohen}) 
allows us to derive
\begin{align*}
\int_0^1 F_{k,D,d}(x) dx  =\frac{H(k,|D|)H(k,|d|)}{2H(k,0)}
\end{align*}
which is equivalent to  (\ref{id}).
\end{proof}


\section{Proofs of Propositions \ref{mult} and \ref{fact}} \label{ppp12}

\begin{proof}[Proof of Proposition \ref{mult}]

Let $n_1$ and $n_2$ be two positive integers such that $(n_1,n_2)=1$. 
We want to prove that
\[
N_{D,d}(n_1n_2) = N_{D,d}(n_1) N_{D,d}(n_2).
\]

Without loss of generality, assume that $n_2$ is odd. Thus, 
$(n_2,4)=1$ and $(4n_1,n_2)=1$. 

We use our definition of $N_{D,d}(n)$ to transform these quantities. We obtain
\begin{align} \label{sum1}
\displaystyle N_{D,d}(n_1n_2)&=\sum_{\substack{0 \leq b \leq 2n_1n_2-1 \\ b^2 \equiv Dd \text{ mod } 4n_1n_2 }} \chi_d \left(\left[-n_1n_2,b,\frac{Dd-b^2}{4n_1n_2}\right]\right) \notag\\
&=\sum_{\substack{0 \leq b \leq 2n_1n_2-1 \\ b^2 \equiv Dd \text{ mod } 4n_1n_2 }} \chi_d \left(\left[-n_1,b,\frac{Dd-b^2}{4n_1n_2}\cdot n_2\right]\right)\chi_d \left(\left[n_2,b,\frac{Dd-b^2}{4n_1n_2}\cdot (-n_1)\right]\right) \notag\\
&=\sum_{\substack{0 \leq b \leq 2n_1n_2-1 \\ b^2 \equiv Dd \text{ mod } 4n_1n_2 }} \chi_d \left(\left[-n_1,b,\frac{Dd-b^2}{4n_1}\right]\right)\chi_d \left(\left[n_2,b,-\frac{Dd-b^2}{4n_2}\right]\right) \notag\\
&=(-1)^k\sum_{\substack{0 \leq b \leq 2n_1n_2-1 \\ b^2 \equiv Dd \text{ mod } 4n_1n_2 }} \chi_d \left(\left[-n_1,b,\frac{Dd-b^2}{4n_1}\right]\right)\chi_d \left(\left[-n_2,b,\frac{Dd-b^2}{4n_2}\right]\right)
\end{align}

Now consider
\begin{align} \label{sum2}
\displaystyle N_{D,d}(n_1) N_{D,d}(n_2)&=\sum_{\substack{0 \leq b_1 \leq 2n_1-1 \\ b_1^2 \equiv Dd \text{ mod } 4n_1}} \chi_d \left(\left[-n_1,b_1,\frac{Dd-b_1^2}{4n_1}\right]\right)\sum_{\substack{0 \leq b_2 \leq 2n_2-1 \\ b_2^2 \equiv Dd \text{ mod } 4n_2 }} \chi_d \left(\left[-n_2,b_2,\frac{Dd-b_2^2}{4n_2}\right]\right) \notag\\
&=\sum_{\substack{0 \leq b_1 \leq 2n_1-1 \\ b_1^2 \equiv Dd \text{ mod } 4n_1 \\ 0 \leq b_2 \leq 2n_2-1 \\ b_2^2 \equiv Dd \text{ mod } 4n_2 }} \chi_d \left(\left[-n_1,b_1,\frac{Dd-b_1^2}{4n_1}\right]\right)\chi_d \left(\left[-n_2,b_2,\frac{Dd-b_2^2}{4n_2}\right]\right).
\end{align}

Note that  the sums (\ref{sum1}) and (\ref{sum2}) have same amounts of summands. Indeed,
denote by $v(n)$ be the number of solutions of  $b^2-Dd \equiv 0 \text{ (mod } n)$. Then
the number of summands in (\ref{sum1}) is 
\[
\frac{1}{2}v(4n_1n_2)=\frac{1}{2}v(4n_1)v(n_2)
\] while the number of summands in 
 (\ref{sum2}) is 
\[
\frac{1}{2}v(4n_1)\cdot \frac{1}{2}v(4n_2)=\frac{1}{2}v(4n_1)\cdot \frac{1}{2}v(4)v(n_2)=\frac{1}{2}v(4n_1)v(n_2).
\] 

We now establish a one-to-one correspondence between these sets of summands such that corresponding summands are equal.

Summand in (\ref{sum2}) are numerated by pairs $(b_1,b_2)$ of residues modulo $2n_1$ and $2n_2$ correspondingly 
(which satisfy additional congruence conditions modulo $4n_1$ and $4n_2$.)
The Chinese Remainder Theorem allows us to find $B$ (unique modulo $4n_1n_2$) such that 
\[
B \equiv b_1 \mod 4n_1 \hspace{3mm} \text{and $B \equiv b_2 \mod n_2$}
\]
We now lift $B$ to an integer, which we also denote by $B$ such that $0 \leq B < 4n_1n_2$, and 
set 
\[
b= \begin{cases} 
B & \text{if $B< 2n_1n_2$} \\
4n_1n_2-B & \text{if $B \geq 2n_1n_2$} 
\end{cases}
\]
It is easy to see that the above procedure establishes a one-to-one correspondence between the sets of summands 
in (\ref{sum1}) and (\ref{sum2}), and we now want to check that corresponding summands are equal.

Since $b \equiv b_1\pmod {4n_1}$, we set $b= b_1+4n_1m = b_1+(2n_1)(2m)$ for some integer $m$ and find that  
\[
\chi_d \left(\left[-n_1,b_1,\frac{Dd-b_1^2}{4n_1}\right]\right)= \chi_d \left(\left[-n_1,b,\frac{Dd-b^2}{4n_1}\right]\right).
\] 
Since $b \equiv b_2 \pmod {n_2}$, we set $b=b_2+n_2m$ for some integer $m$. 
The congruence $b_2^2 \equiv Dd \pmod 4$ implies $b_2 \equiv Dd \pmod 2$. 
Similarly, $b^2 \equiv Dd \pmod 4$ implies $b \equiv Dd \pmod 2$ and $b \equiv b_2 \pmod 2$. 
Since $n_2$ is odd, $m$ must be even, $m=2m'$.
Thus, $b=b_2+n_2m=b_2+n_2(2m')=b_2+2n_2(m')$. Now we have 

\[
\chi_d \left(\left[-n_2,b_2,\frac{Dd-b_2^2}{4n_2}\right]\right)= \chi_d \left(\left[-n_2,b,\frac{Dd-b^2}{4n_2}\right]\right). 
\]

It follows that 
\[
N_{D,d}(n_1n_2)=(-1)^k N_{D,d}(n_1) N_{D,d}(n_2),
\]
therefore
\[
(-1)^kN_{D,d}(n_1n_2)= [(-1)^k N_{D,d}(n_1)] [(-1)^k N_{D,d}(n_2)]
\]
as required.

\end{proof}


We now turn to the proof of Proposition \ref{fact}.
This proof varies slightly depending on whether the involved quantities are or are not divisible by $p$.
Also, the case $p=2$ has to be considered separately. 
In particular, we say that we are in {\bf Case 1} if $p \nmid f$, and in {\bf Case 2} if $p | f$.
In each case, we consider the following sub-cases

\begin{enumerate}[{\bf(i)}]
\item $p \nmid d$, $p \nmid D_0$ 
\item $p \nmid d$, $p|D_0$ 
\item $p | d$, $p \nmid D_0$
\item $p| d$, $p|D_0$,
\end{enumerate}
and in every sub-case we will have part {\bf(a)} if $p$ is odd, and part {\bf(b)} for $p=2$.

For the sake of space and clarity, we present here  proofs only for {\bf Case 1(i)(a)} and {\bf Case 2(iii)(a)}. While the former is the simplest generic case, we will use the latter to illustrate the ideas involved in these proofs. In the remaining cases, one exploits same
set of ideas, specifically, one uses an explicit calculation of the quantities $N_{D,d}(p^n)$. 

\begin{proof}[Proof of Proposition \ref{fact} in \textbf{\bf Case 1(i)(a)}]

Recall the assumptions: $p \nmid f$, $p \nmid d$ and $p \nmid D_0$ with $p$ odd.

We need to prove the identity
\[
 \sum_{n=0}^{\infty} \frac{(-1)^kN_{D,d}(p^n)}{p^{nk}} =  \frac{1-p^{-2k}}{\left(1-\left(\frac{D_0}{p}\right)p^{-k}\right)\left(1-\left(\frac{d}{p}\right)p^{-k}\right)}.
\]

As long as $p \nmid d$, we can use an explicit formula for the genus character proved in \cite{Gross} to get  
\[
\chi_d([-p^n,b,c])=\left(\frac{d}{-p^n}\right)\left(\frac{1}{c}\right)=\left(\frac{d}{-p^n}\right).
\] 
We thus have that
\[
N_{D,d}(p^n)=\sum_{\substack{0 \leq b \leq 2n-1 \\ b^2 \equiv Dd \text{ mod } 4p^n }} \chi_d ([-p^n,b,\frac{Dd-b^2}{4p^n}])=\left(\frac{d}{-p^n}\right)\sum_{\substack{0 \leq b \leq 2n-1 \\ b^2 \equiv Dd \text{ mod } 4p^n }} 1.
\]
We make use of notation  (cf. \cite[Section 8]{Zagier})   
\[ 
N_{\Delta}(n)=\sum_{\substack{0 \leq b \leq 2n-1 \\ b^2 \equiv \Delta \text{ mod } 4n }} 1
\] 
to obtain 
\begin{align*}
\sum_{n=0}^{\infty} \frac{(-1)^kN_{D,d}(p^n)}{p^{nk}} &= \sum_{n=0}^{\infty} \frac{(-1)^k\left(\frac{d}{-p^n} \right) N_{Dd}(p^n)}{p^{nk}}\\
&=\sum_{n=0}^{\infty} \frac{(-1)^k \left(\frac{d}{-1} \right) \left(\frac{d}{p^n} \right) N_{Dd}(p^n)}{p^{nk}}\\
&=\sum_{n=0}^{\infty} \frac{\left(\frac{d}{p} \right)^n N_{Dd}(p^n)}{p^{nk}}.
\end{align*}
Recall that $v(n)$ denotes the number of solutions of  $b^2-Dd \equiv 0 \text{ (mod } n)$. 
Since $p$ is odd,
\[
N_{Dd}(p^n)= \frac{1}{2} \cdot  v(4p^n) \\
= \frac{1}{2} \cdot  v(4) \cdot  v(p^n) \\
= \frac{1}{2} \cdot  2 \cdot  v(p^n) \\
=v(p^n). \\
\]

If $\left(\frac{d}{p} \right) \not = \left(\frac{D}{p} \right)= \left(\frac{D_0}{p} \right)$, then $\left(\frac{Dd}{p} \right)=-1$ means that  
$Dd$ is a quadratic non-residue $\mod p$, therefore $v(p^n)=N_{Dd}(p^n) =0$ for $n \geq 1$, and
\[
 \sum_{n=0}^{\infty} \frac{(-1)^kN_{D,d}(p^n)}{p^{nk}}=1= \frac{(1+p^{-k})(1-p^{-k})}{\left(1-\left(\frac{D_0}{p} \right)p^{-k}\right)\left(1-\left(\frac{d}{p} \right)p^{-k}\right)}
\]
as required.

If $\left(\frac{d}{p} \right) = \left(\frac{D}{p} \right)$, then $\left(\frac{Dd}{p} \right)=1$ and $Dd$ is a quadratic residue modulo $p$.
Then Hensel's lemma implies that $v(p^n)=N_{Dd}(p^n) =2$ for $n \geq 1$, and we calculate
\begin{align*}
\sum_{n=0}^{\infty} \frac{(-1)^kN_{D,d}(p^n)}{p^{nk}} & =\sum_{n=0}^{\infty} \frac{\left(\frac{d}{p} \right)^n N_{Dd}(p^n)}{p^{nk}}\\
&=1+\frac{\left(\frac{d}{p} \right)\cdot 2}{p^k}+\frac{ \left(\frac{d}{p^2} \right) \cdot 2}{p^{2k}}+\frac{ \left(\frac{d}{p^3} \right)\cdot 2}{p^{3k}}+\cdots\\
&=1+\frac{2\left(\frac{d}{p} \right)p^{-k}}{1-\left(\frac{d}{p} \right)p^{-k}}\\
&=\frac{1+\left(\frac{d}{p} \right)p^{-k}}{1-\left(\frac{d}{p} \right)p^{-k}}\\
&=\frac{\left(1+\left(\frac{d}{p} \right)p^{-k}\right)\left(1-\left(\frac{d}{p} \right)p^{-k}\right)}{\left(1-\left(\frac{d}{p} \right)p^{-k}\right)\left(1-\left(\frac{d}{p} \right)p^{-k}\right)}\\
&=\frac{(1+p^{-k})(1-p^{-k})}{\left(1-\left(\frac{D_0}{p} \right)p^{-k}\right)\left(1-\left(\frac{d}{p} \right)p^{-k}\right)}.
\end{align*}
as required.
\end{proof}

\begin{proof}[Proof of Proposition \ref{fact} in \textbf{\bf Case 2(iii)(a)}]
Recall the assumptions: $p | f$, $p | d$ and $p \nmid D_0$ with $p$ odd. Furthermore, recall that integer $e >0$ is defined as the maximum power of $p$ dividing $f$, namely $p^e || f$.

Under these assumptions, one can calculate the quantities $N_{D,d}(p^n)$ to be:
\[
\begin{array}{lccc}
N_{D,d}(p^{2s-1})&= & 0  & \text{for}  \hspace{3mm}1 \leq s \leq e, \\
N_{D,d}(p^{2s})&=& (-1)^k(p^{s}-p^{s-1})  & \text{for} \hspace{3mm} 1 \leq s \leq e, \\
N_{D,d}(p^{2e+1})&=&(-1)^k\left(\frac{D_0}{p}\right)p^e ,\\
N_{D,d}(p^{n})&=&0 & \text{for} \hspace{3mm} n \geq 2e+2.
\end{array}
\]
Thus we have that

\begin{align*}
&\sum_{n=0}^{\infty} \frac{(-1)^kN_{D,d}(p^n)}{p^{nk}}
= 1+\frac{p-1}{p^{2k}}+\frac{p^2-p}{p^{4k}}+\frac{p^3-p^2}{p^{6k}}+\cdots+\frac{p^e-p^{e-1}}{p^{2ek}}+\frac{\left(\frac{D_0}{p}\right)p^e}{p^{(2e+1)k}}\\
=&1+\frac{1}{p^{2k-1}}+\frac{1}{p^{2(2k-1)}}+\cdots+\frac{1}{p^{e(2k-1)}}-\left(\frac{1}{p^{2k}}+\frac{1}{p^{4k-1}}+\frac{1}{p^{6k-2}}+\cdots+\frac{1}{p^{2ek-e+1}}\right)+\frac{\left(\frac{D_0}{p}\right)p^e}{p^{(2e+1)k}}\\
=&\frac{\sigma_{2k-1}(p^e)}{p^{e(2k-1)}}-\frac{p^{(e-1)(2k-1)}+p^{(e-2)(2k-1)}+\cdots+p^{2k-1}+1}{p^{e(2k-1)+1}}+\frac{\left(\frac{D_0}{p}\right)p^e}{p^{(2e+1)k}} \\
=&\frac{\sigma_{2k-1}(p^e)}{p^{e(2k-1)}}-\frac{\sigma_{2k-1}(p^{e-1})}{p^{e(2k-1)+1}}+\frac{\left(\frac{D_0}{p}\right)}{p^{e(2k-1)+k}}\\
=&\frac{p^k\sigma_{2k-1}(p^{e})+\left(\frac{D_0}{p}\right)\sigma_{2k-1}(p^{e})-\left(\frac{D_0}{p}\right)p^{2k-1}\sigma_{2k-1}(p^{e-1})-p^{k-1}\sigma_{2k-1}(p^{e-1})}{p^{e(2k-1)+k}}\\
=& \frac{p^k+\left(\frac{D_0}{p}\right)}{p^{k}} \cdot \frac{\sigma_{2k-1}(p^{e})-\left(\frac{D_0}{p}\right)p^{k-1}\sigma_{2k-1}(p^{e-1})}{p^{e(2k-1)}} \\
=& \left(1+\left(\frac{D_0}{p}\right)p^{-k}\right)\frac{1}{p^{e(2k-1)}} \left(\sigma_{2k-1}(p^{e})-\left(\frac{D_0}{p}\right)p^{k-1}\sigma_{2k-1}(p^{e-1})\right)\\
=& \frac{1-p^{-2k}}{\left(1-\left(\frac{D_0}{p}\right)p^{-k}\right)\left(1-\left(\frac{d}{p}\right)p^{-k}\right)} \frac{1}{(p^e)^{2k-1}} \left(\sigma_{2k-1}(p^{e})-\left(\frac{D_0}{p}\right)p^{k-1}\sigma_{2k-1}(p^{e-1})\right).
\end{align*}

\end{proof}
\section{Proof of Theorem 2} \label{pt2}

The statement follows easily from 
\begin{equation} \label{one}
F_{1,D,d}(x+1)=F_{1,D,d}(x),
\end{equation}

\begin{equation} \label{two}
F_{1,D,d}(0)=0
\end{equation}
and

\begin{equation} \label{three}
F_{1,D,d}\left(\frac{1}{x}\right)=F_{1,D,d}(x)
\end{equation}
for every $x \in \Q$.

It is easy to verify that (\ref{one}) holds. 

In order to check (\ref{two}), notice that

\begin{equation} \label{four}
\sum_{\substack{Q=[a,b,c]\in \mathbb{Z}^3 \\ b^2-4ac=Dd \\a<0<c}} \chi_d ([a,b,c])=
\sum_{\substack{Q=[a,b,c]\in \mathbb{Z}^3 \\ b^2-4ac=Dd \\c<0<a}} \chi_d ([a,b,c])=
0.
\end{equation}
since if $[a,b,c]$ appears in the sum, so does $[-c,b,-a]$ and $ \chi_d ([a,b,c])= -\chi_d ([-a,-b,-c])=-\chi_d ([-c,b,-a])$. 
Equation (\ref{two}) follows immediately, because the first sum equals $F_{1,D,d}(0)$.

We now prove (\ref{three}). We start with a transformation of $F_{1,D,d}(1/x)$:

\begin{align*}
F_{1,D,d}\left(\frac{1}{x}\right)=&\sum_{\substack{Q=[a,b,c]\in \mathbb{Z}^3 \\ b^2-4ac=Dd \\a<0\\Q\left(\frac{1}{x}\right)>0}} \chi_d ([a,b,c])\\
=& \sum_{\substack{Q=[a,b,c]\in \mathbb{Z}^3 \\ b^2-4ac=Dd \\a<0\\a\left(\frac{1}{x}\right)^2+b\left(\frac{1}{x}\right)+c>0}} \chi_d ([a,b,c])\\
=& \sum_{\substack{Q=[a,b,c]\in \mathbb{Z}^3 \\ b^2-4ac=Dd \\a<0\\a+bx+cx^2>0}} \chi_d ([a,b,c])\\
=& \sum_{\substack{Q=[c,b,a]\in \mathbb{Z}^3 \\ b^2-4ac=Dd \\c<0\\ax^2+bx+c>0}} \chi_d ([c,b,a]) \hspace*{1cm}\text{ (switched the names } a \text{ and } c)\\
=& \sum_{\substack{Q=[a,b,c]\in \mathbb{Z}^3 \\ b^2-4ac=Dd \\c<0\\ax^2+bx+c>0}} \chi_d ([a,b,c])  \hspace*{1cm}\text{ (by properties of } \chi_d).\\
\end{align*}

It follows that
\begin{align*}
&F_{1,D,d}\left(\frac{1}{x}\right)-F_{1,D,d}(x)\\
=& \sum_{\substack{Q=[a,b,c]\in \mathbb{Z}^3 \\ b^2-4ac=Dd \\c<0\\ax^2+bx+c>0}} \chi_d ([a,b,c])  - \sum_{\substack{Q=[a,b,c]\in \mathbb{Z}^3 \\ b^2-4ac=Dd \\a<0\\ax^2+bx+c>0}} \chi_d ([a,b,c]) \\
=& \sum_{\substack{Q=[a,b,c]\in \mathbb{Z}^3 \\ b^2-4ac=Dd \\c<0<a\\ax^2+bx+c>0}} \chi_d ([a,b,c])- \sum_{\substack{Q=[a,b,c]\in \mathbb{Z}^3 \\ b^2-4ac=Dd \\a<0<c\\ax^2+bx+c>0}} \chi_d ([a,b,c])  \hspace*{1cm}{(ac \not = 0 \text{ since } Dd \text{ is not a square})}\\
=& \sum_{\substack{Q=[a,b,c]\in \mathbb{Z}^3 \\ b^2-4ac=Dd \\c<0<a\\ax^2+bx+c>0}} \chi_d ([a,b,c])+ \sum_{\substack{Q=[a,b,c]\in \mathbb{Z}^3 \\ b^2-4ac=Dd \\-a>0>-c\\-ax^2-bx-c<0}} \chi_d ([-a,-b,-c])\\
=& \sum_{\substack{Q=[a,b,c]\in \mathbb{Z}^3 \\ b^2-4ac=Dd \\c<0<a\\ax^2+bx+c>0}} \chi_d ([a,b,c])+ \sum_{\substack{Q=[a,b,c]\in \mathbb{Z}^3 \\ b^2-4ac=Dd \\a>0>c\\ax^2+bx+c<0}} \chi_d ([a,b,c]) \hspace*{1cm}  
\begin{tabular}{l}
(\text{replaced } $-a,-b,-c$ \text{ by } $a,b,c$ \\
\text{ in the second sum})
\end{tabular}
\\
=& \sum_{\substack{Q=[a,b,c]\in \mathbb{Z}^3 \\ b^2-4ac=Dd \\c<0<a}} \chi_d ([a,b,c]) \hspace*{1cm} (ax^2+bx+c \not = 0 \text{ since } Dd \text{ is not a square})\\
=&  0 \text{ by (\ref{four})}.
\end{align*}

\qed



\begin{thebibliography}{99}

\bibitem{Cohen} 
Cohen, Henri. Sums involving the values at negative integers of $L$-functions of quadratic characters.
\textit{Math. Ann.}
, 217 (1975), no. 3, 271-285.
 
\bibitem{Gross} 
Gross, B.; Kohnen, W.; Zagier, D.  Heegner points and derivatives of $L$-series. II.
\textit{Math. Ann.}
, 278 (1987), no. 1-4, 497-562.

 \bibitem{Jameson} 
Jameson, Marie.  A problem of Zagier on quadratic polynomials and continued fractions.
\textit{Int. J. Number Theory}, 12 (2016), no. 1, 121-141

\bibitem{Zagier} 
Zagier, D. From quadratic functions to modular functions. In
\textit{Number Theory in Progress. Vol 2 (Zakopane-Ko\'scielisko, 1997)}
, pages 1147-1178. de Gruyter, Berlin, 1999. 

\bibitem{Zn}
 Zagier, D. Nombres de classes et formes modulaires de poids 3/2, C. R. Acad. Sci. Paris S\'er. A-B 281 (1975), no. 21

\end{thebibliography}
\end{document}